\theoremstyle{definition}
\newtheorem{thm}{Theorem}[section]
\newtheorem{prop}[thm]{Proposition}
\newtheorem{defn}[thm]{Definition}
\numberwithin{thm}{subsection}
\newcommand{\proofpart}[2]{%
	\par
	\addvspace{\medskipamount}%
	\noindent\emph{$\bullet$ Part #1: #2}\par\nobreak
	\addvspace{\smallskipamount}%
	\@afterheading
}
\renewcommand{\epsilon}{\varepsilon}
\renewcommand{\phi}{\varphi}
\renewcommand{\subset}{\subseteq}
\newcommand{\A}{{\mathbb A}}
\newcommand{\Q}{{\mathbb Q}}
\newcommand{\R}{{\mathbb R}}
\renewcommand{\L}{{\mathbb L}}
\newcommand{\C}{{\mathbb C}}
\title{The dlt motivic zeta function is not well-defined}
\author{Johannes Nicaise}
\address{Imperial College,
Department of Mathematics, South Kensington Campus,
London SW72AZ, UK, and KU Leuven, Department of Mathematics, Celestijnenlaan 200B, 3001 Heverlee, Belgium.} \email{j.nicaise@imperial.ac.uk}
\author{Naud Potemans}
\address{Department of Mathematics, Celestijnenlaan 200B, 3001
			Heverlee, Belgium}
\email{naud.potemans@kuleuven.be}
\author{Willem Veys}
\address{Department of Mathematics, Celestijnenlaan 200B, 3001
			Heverlee, Belgium}
\email{wim.veys@kuleuven.be}
\thanks{The authors are partially supported by grants G079218N and G0B17121N
	of the Fund for Scientific Research -- Flanders (FWO), long term structural funding (Methusalem grant) of the Flemish Government, and EPSRC grant EP/S025839/1.}
\begin{document}

\begin{abstract}
 In \cite{Xu1}, Xu defines the dlt motivic zeta function associated to a regular function $f$ on a smooth variety $X$ over a field of characteristic zero. This is an adaptation of the classical motivic zeta function that was introduced by Denef and Loeser \cite{DL2}. The dlt motivic zeta function is defined on a dlt modification via a Denef-Loeser-type formula, replacing classes of strata in the Grothendieck ring of varieties by stringy motives. We provide explicit examples that show that the dlt motivic zeta function depends on the choice of dlt modification, contrary to what is claimed in \cite{Xu1}, and that it is therefore not well-defined. 	
\end{abstract}	
	\maketitle
\section{Introduction}
Let $k$ be a field of characteristic $0$. Let $X$ be a smooth $k$-variety and $f$ a non-constant regular function on $X$. In \cite{DL2}, Denef and Loeser define the motivic zeta function of $f$, a formal power series over a suitable Grothendieck ring of $k$-varieties that should be viewed as a motivic upgrade of Igusa's local zeta function associated with a polynomial over a $p$-adic field. The motivic zeta function captures the geometric structure underlying Igusa's local zeta function and is a very rich invariant of the singularities of the hypersurface $Z(f)$ defined by $f=0$. It is defined intrinsically in terms of a motivic integral on $X$, but it can be calculated explicitly on a log resolution of $(X,Z(f))$. In this way, it allows one to extract invariants from log resolutions that are independent of the choice of the log resolution.

 The most important question around the motivic zeta function is the {\em monodromy conjecture}, which expresses a precise relation between the poles of the motivic zeta function and the local monodromy eigenvalues of $f$. A key problem in this context is that most of the apparent poles in the explicit formula on a log resolution tend to cancel out in practice, for reasons which are not yet understood. To analyze this behaviour, it is natural to try to compute motivic zeta functions on suitable partial resolutions of $(X,Z(f))$, to reduce the set of candidate poles.
	
In \cite{Xu1}, Xu proposed a variant of the motivic zeta function, called the dlt motivic zeta function. At first sight, it no longer has an intrinsic definition in terms of $X$ and $f$; rather, Xu proposed an explicit formula in terms of a dlt modification of $(X,Z(f)_{red})$, which should be viewed as a minimal partial resolution of singularities of the pair. This formula involves the so-called stringy motives of the strata of the total transform of $Z(f)$. The central claim of \cite{Xu1} is then that the dlt motivic zeta function is independent of the choice of the dlt modification. Unfortunately, this claim is incorrect; the goal of the present article is to provide counter-examples and to locate the errors in the arguments in \cite{Xu1}.	

\section{Preliminaries and notation}
\subsection{Singularities of pairs}
	Throughout this article, we work over a base field $k$ of characteristic 0. A $k$-variety
will be an integral separated $k$-scheme of finite type.	
	
	 We will largely follow the definitions of \cite{KoMMP}, but we use a more restrictive definition of a {\em pair}: in this article, a pair $(X,\Delta)$ consists of a normal quasi-projective $k$-variety $X$ together with an effective $\Q$-divisor $\Delta$ such that $K_X+\Delta$ is $\Q$-Cartier. We say that $\Delta$ is a boundary if its coefficients all lie in $[0,1]$.
	
	Let $(X,\Delta)$ be a pair. Consider a birational morphism $\pi:Y\rightarrow X$ where $Y$ is a normal $k$-variety. Write
	\[
	K_Y-\pi^*(K_X+\Delta)=\sum_E d(E,X,\Delta)E,
	\]
	where $E\subset Y$ are distinct prime divisors. Note that the left hand side of this expression is well-defined as a $\Q$-divisor, and not merely up to $\Q$-linear equivalence.
		We define the \textit{log discrepancy} $a(E,X,\Delta)$ of $E$ associated with the pair $(X,\Delta)$ as $a(E,X,\Delta)=d(E,X,\Delta)+1$.
	Moreover, we define the discrepancy of $(X,\Delta)$ by
	\[
	\mathrm{discrep}(X,\Delta)=\inf_E d(E,X,\Delta)
	\] where $E$ runs over all exceptional prime divisors of all birational morphisms $\pi\colon Y\to X$ with $Y$ a normal $k$-variety.
	
 We say that $(X,\Delta)$ is {\em snc} if $X$ is smooth and $\Delta$ is a reduced divisor with strict normal crossings.	We call $(X,\Delta)$ \textit{log canonical} (lc) if $\mathrm{discrep}(X,\Delta)\geq-1$, and  \textit{Kawamata log terminal} (klt) if $\mathrm{discrep}(X,\Delta)>-1$ and all coefficients of $\Delta$ lie in $[0,1)$. 	If $(X,\Delta)$ is a log canonical pair, then a closed subset $Z\subset X$ is called a {\em log canonical center} of $(X,\Delta)$ if there exists a prime divisor $E$ over $X$ such that $a(E,X,\Delta)=0$ and the center of $E$ in $X$ is $Z$.

	We say that a log canonical pair $(X,\Delta)$ is \textit{divisorial log terminal} (dlt) if  there is an open set $U\subseteq X$ such that $(U,\Delta\vert_U)$ is snc and $U$ meets every log canonical center of $(X,\Delta)$. If $(X,\Delta)$ is dlt, then the log canonical centers of $(X,\Delta)$ are precisely the connected components of intersections of $r$ distinct components of multiplicity $1$ in $\Delta$, for all $r\geq 1$, by Section 3.9 of \cite{Fuj}. Each of these log canonical centers is normal, and its codimension is equal to $r$.

	\begin{defn}
		Consider a pair $(X,\Delta)$ with $\Delta$ a boundary. A {\em dlt modification} of $(X,\Delta)$ is a proper birational morphism $g:X^{dlt}\rightarrow X$ such that
		\begin{enumerate}
			\item[i.] $X^{dlt}$ is $\Q$-factorial,
			\item[ii.] $(X^{dlt},\Delta^{dlt})$ is dlt, where $\Delta^{dlt}\subset X^{dlt}$ is the sum of the birational transform of $\Delta$ and the reduced exceptional divisor of $g$,
			\item[iii.] $K_{X^{dlt}}+\Delta^{dlt}$ is $g$-nef, that is, $(K_{X^{dlt}}+\Delta^{dlt})\cdot C\geq0$ for all irreducible curves $C\subset X^{dlt}$ that are contracted to a point by $g$.
		\end{enumerate}
 To make the divisor $\Delta$ explicit, we	will also say that $g:(X^{dlt},\Delta^{dlt})\rightarrow (X,\Delta)$ is a dlt modification.
	\end{defn}
	
	Such a dlt modification always exists, by Theorem 3.1 in \cite{KoKo}. It is not unique in general, but any two dlt modifications of $(X,\Delta)$ are crepant birational.
	
\subsection{The different and the stringy motive}
	Let $(X,\Delta)$ be a dlt pair and let $Z$ be a log canonical center of $(X,\Delta)$. We denote by $U$ the maximal open subscheme of $X$ such that $(U,\Delta\vert_U)$ is snc and such that $U$ only intersects those irreducible components of $\Delta$ that contain $Z$. Then there exists a unique $\Q$-divisor $\mathrm{Diff}_Z(\Delta)$ on $Z$, called the {\em different}, such that for all sufficiently divisible even integers $m>0$, the Poincar\'e residue isomorphism
$$\omega^{\otimes m}_{U}(m\Delta\vert_U)\vert_{Z\cap U}\to \omega^{\otimes m}_{Z\cap U}$$ extends to an isomorphism
	$$\omega^{[m]}_{X}(m\Delta)\vert_{Z}\to \omega^{[m]}_{Z}(m\mathrm{Diff}_{Z}(\Delta)).$$
		 See Definition 4.2 in \cite{KoMMP}, where the different was denoted by $\mathrm{Diff}^*$.
		 We recall that $\omega^{[m]}_{X}(m\Delta)$ denotes the reflexive hull of the sheaf $\omega^{\otimes m}_X(m\Delta)$ of $m$-canonical forms with poles bounded by $m\Delta$, and similarly for $\omega^{[m]}_{Z}(m\mathrm{Diff}_{Z}(\Delta))$. If $m$ is sufficiently divisible then $m(K_X+\Delta)$ is Cartier and $\omega^{[m]}_{X}(m\Delta)$ is the corresponding line bundle.
		
		  The definition implies in particular that 	
		\[
		(K_X+\Delta)\vert_Z \sim_{\Q} K_Z+\mathrm{Diff}_Z(\Delta)
		\]
but $\mathrm{Diff}_Z(\Delta)$ is well-defined as a $\Q$-divisor, and not merely up to $\Q$-linear equivalence.	  By Theorem 4.19 in \cite{KoMMP}, the different $\mathrm{Diff}_Z(\Delta)$ is effective, and the pair $(Z,\mathrm{Diff}_Z(\Delta))$ is again dlt.

When $(X,\Delta)$ is snc, then $\mathrm{Diff}_Z(\Delta)=\Delta'\vert_Z$ where $\Delta'$ is the sum of the prime components of $\Delta$ that do not contain $Z$; this is the usual adjunction formula for snc pairs. This result can be generalized in the following way, leading to a practical method to compute differents on log resolutions.
		
	\begin{prop} Let $(X,\Delta)$ be a dlt pair, and let $Z$ be a log canonical center of $(X,\Delta)$. Let $\pi\colon Y\rightarrow X$ be a log resolution of $(X,\Delta)$ such that $\pi$ is an isomorphism over the generic point of $Z$. Let $\Delta_1,\ldots,\Delta_r$ be the irreducible components of $\Delta$ that contain $Z$. For each $i\in \{1,\ldots,r\}$ we denote by $\tilde{\Delta}_i$ the birational transform of $\Delta_i$ on $Y$. Similarly, we write $\tilde{Z}$ for the birational transform of $Z$ on $Y$.
		We set	
		\[\Delta_Y=\pi^*(K_X+\Delta)-(K_Y+\sum^r_{i=1}\tilde{\Delta}_i).\]
		 Then $\tilde{Z}$ is not contained in the support of $\Delta_Y$, so that $\Delta_Y\vert_{\tilde{Z}}$ is a well-defined $\Q$-divisor on $\tilde{Z}$. We can express the different $\mathrm{Diff}_Z(\Delta)$ as
		\[
		\mathrm{Diff}_Z(\Delta)=(\pi\vert_{\tilde{Z}})_*(\Delta_Y\vert_{\tilde{Z}}).
		\]
\end{prop}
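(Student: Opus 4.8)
\emph{Proof plan.}
The plan is to transport the defining property of $\mathrm{Diff}_Z(\Delta)$ — the extended Poincar\'e residue isomorphism — to the log resolution $\pi$, where it becomes an ordinary Poincar\'e residue for a simple normal crossings stratum, and then to push the resulting identity of $\Q$-divisors back down to $Z$. I would begin with the local analysis near the generic point $\eta_Z$ of $Z$: since $(X,\Delta)$ is dlt and $Z$ is a log canonical center, $(X,\Delta)$ is snc in a neighbourhood of $\eta_Z$, the only components of $\Delta$ through $\eta_Z$ are $\Delta_1,\dots,\Delta_r$, each of coefficient $1$, and $\pi$ is an isomorphism there; a direct computation then gives $\Delta_Y=0$ near the generic point $\tilde\eta_Z$ of $\tilde Z$. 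This already shows $\tilde Z\not\subseteq\mathrm{Supp}(\Delta_Y)$, so $\Delta_Y|_{\tilde Z}$ is a well-defined $\Q$-divisor, and it shows $\tilde\eta_Z\in\tilde\Delta_i$ for all $i$, hence $\tilde Z\subseteq\bigcap_{i=1}^r\tilde\Delta_i$. Since $\pi$ is a log resolution, $\tilde\Delta_1+\dots+\tilde\Delta_r$ is part of a simple normal crossings divisor, so $\bigcap_{i=1}^r\tilde\Delta_i$ is smooth of pure codimension $r$; as $\tilde Z$ is irreducible of dimension $\dim Z=\dim X-r$, it is a connected component of $\bigcap_{i=1}^r\tilde\Delta_i$, in particular smooth and a genuine snc stratum. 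Using once more that $\pi$ is an isomorphism over $\eta_Z$ (so $\tilde Z$ lies in no $\pi$-exceptional divisor and in no birational transform of a component of $\Delta$ other than $\Delta_1,\dots,\Delta_r$), one checks that the only components of $\Gamma:=\sum_{i=1}^r\tilde\Delta_i+\Delta_Y$ containing $\tilde Z$ are $\tilde\Delta_1,\dots,\tilde\Delta_r$, each with coefficient $1$.

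Next, the crepant identity $K_Y+\Gamma=\pi^*(K_X+\Delta)$ gives, for $m$ sufficiently divisible, an isomorphism $\omega_Y^{[m]}(m\Gamma)\cong\pi^*\bigl(\omega_X^{[m]}(m\Delta)\bigr)$ ($Y$ is smooth, so $m\Gamma$ is an integral Cartier divisor). Restricting to the smooth stratum $\tilde Z$, the Poincar\'e residue along $\tilde\Delta_1,\dots,\tilde\Delta_r$ is defined on $\omega_Y^{[m]}(m\Gamma)|_{\tilde Z}$ — precisely because these are the components of $\Gamma$ through $\tilde Z$, each of coefficient $1$, and $\tilde Z\not\subseteq\mathrm{Supp}(\Delta_Y)$ — and identifies it with $\omega_{\tilde Z}^{[m]}(m\,\Delta_Y|_{\tilde Z})$. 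Write $p:=\pi|_{\tilde Z}\colon\tilde Z\to Z$; it is proper birational with $\tilde Z$ smooth and $Z$ normal, so $p_*\mathcal O_{\tilde Z}=\mathcal O_Z$, and it is an isomorphism over the complement of a closed subset of codimension $\geq 2$ in $Z$. Pushing the last identification forward and using the projection formula, one gets $p_*\bigl(\omega_{\tilde Z}^{[m]}(m\,\Delta_Y|_{\tilde Z})\bigr)\cong\omega_X^{[m]}(m\Delta)|_Z\cong\omega_Z^{[m]}(m\,\mathrm{Diff}_Z(\Delta))$, the last arrow being the defining extended residue. The key input here is the functoriality of the Poincar\'e residue, $\mathrm{Res}_{\tilde Z}\circ\pi^*=p^*\circ\mathrm{Res}_Z$: on the open subset of $Z$ over which $p$ is an isomorphism it forces the displayed identification to agree with the one defining $\mathrm{Diff}_Z(\Delta)$, so the $\Q$-divisors $p_*(\Delta_Y|_{\tilde Z})$ and $\mathrm{Diff}_Z(\Delta)$ coincide there; since that open has complement of codimension $\geq 2$ in the normal variety $Z$, they coincide everywhere. (This step is just the compatibility of the different with crepant birational pushforward, which may also be quoted from the literature.)

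The main obstacle is bookkeeping: the Poincar\'e residue and the different are intrinsically honest $\Q$-divisors rather than $\Q$-linear equivalence classes, so the isomorphisms above must be carried along as genuine identifications of pluricanonical sheaves compatible with restriction, and the comparison of $p_*(\Delta_Y|_{\tilde Z})$ with $\mathrm{Diff}_Z(\Delta)$ must be reduced to the locus where $\pi$ — equivalently $p$ — is an isomorphism, which is where these identifications literally coincide and where the codimension-$\geq 2$ bound upgrades the local equality to a global one. A secondary point is that $\Gamma$ need not be a boundary, because the coefficients of $\Delta_Y$ along $\pi$-exceptional divisors of positive discrepancy are negative; one therefore cannot invoke the dlt adjunction formula on $Y$ verbatim, but the Poincar\'e residue for an snc stratum sees only the components through $\tilde Z$ and is insensitive to the signs of the remaining coefficients, which is all one needs.
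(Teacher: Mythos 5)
Your proposal is correct and follows essentially the same route as the paper's proof: establish that $\tilde Z\not\subseteq\mathrm{Supp}(\Delta_Y)$ from the dlt coefficient-$1$ condition, apply the Poincar\'e residue along the snc stratum $\tilde Z$ on $Y$, twist by $\mathcal{O}_{\tilde Z}(m\Delta_Y|_{\tilde Z})$, push forward along $p=\pi|_{\tilde Z}$, and match against the defining residue of $\mathrm{Diff}_Z(\Delta)$ by reducing to the locus where $p$ is an isomorphism, whose complement has codimension $\geq 2$ in the normal variety $Z$. The paper phrases the final reduction slightly differently (comparing a line bundle with a reflexive sheaf at codimension-$1$ points of $Z$), and your remark that $\Gamma$ need not be a boundary is implicit in the paper's device of residuating on $\omega_Y^{\otimes m}(m\sum\tilde\Delta_i)$ and twisting afterward, but these are the same argument.
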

\begin{proof}
By the definition of a dlt pair, each of the prime divisors $\Delta_i$ appears with coefficient $1$ in $\Delta$, so that $\tilde{Z}$ is not contained in the support of $\Delta_Y$.
 For notational convenience, we set $D=\Delta_Y\vert_{\tilde{Z}}$.

 For every even integer $m>0$, we can consider the Poincar\'e residue isomorphism
$$\omega_Y^{\otimes m}(m\sum_{i=1}^r \tilde{\Delta}_i)\vert_{\tilde{Z}}\to  \omega^{\otimes m}_{\tilde{Z}}.$$
When $m$ is sufficiently divisible, then, twisting by the line bundle $\mathcal{O}_{\tilde{Z}}(mD)$, we obtain an isomorphism
$$(\pi\vert_{\tilde{Z}})^*(\omega^{[m]}_X(m\Delta)\vert_{Z})=\omega_Y^{\otimes m}(m\Delta_Y+m\sum^r_{i=1} \tilde{\Delta}_i)\vert_{\tilde{Z}}\to  \omega^{\otimes m}_{\tilde{Z}}(mD)$$
and taking the pushforward with respect to $\pi\vert_{\tilde{Z}}$ we get an isomorphism
$$\omega^{[m]}_X(m\Delta)\vert_{Z} \to (\pi\vert_{\tilde{Z}})_*\omega^{\otimes m}_{\tilde{Z}}(mD)$$ that coincides with the Poincar\'e residue isomorphism at the generic point of $Z$.

 Let $V$ be an open neighbourhood of the generic point of $Z$ in $X$ such that the pair $(V,\Delta|_V)$ is snc, we have $\Delta=\sum_{i=1}^r\Delta_i$ on $V$, and $\pi$ restricts to an isomorphism  $\pi^{-1}(V)\to V$.
  Then it suffices to show that for all sufficiently divisible integers $m>0$, the sheaves
$(\pi\vert_{\tilde{Z}})_*\omega^{\otimes m}_{\tilde{Z}}(mD)$ and  $\omega_Z^{[m]}((\pi\vert_{\tilde{Z}})_*mD)$
 are equal as subsheaves of the pushforward of $\omega^{\otimes m}_{V\cap Z}$ to $Z$.
  The first of these two sheaves is a line bundle because it is isomorphic to $\omega^{[m]}_X(m\Delta)\vert_{Z}$, and the second is reflexive by definition, so it suffices to check the equality at codimension $1$ points of $Z$. Over such points, $\pi\vert_{\tilde{Z}}$ is an isomorphism because $Z$ is normal, so that the equality is trivially satisfied.
\end{proof}
\iffalse
		This results from the following computation:
		\begin{align*}
			&(\pi\vert_{\tilde{Z}})_*((\pi^*(K_X+\Delta)-(K_Y+\sum_{i\in I}\tilde{\Delta}_i))\vert_{\tilde{Z}})\\
			&=(\pi\vert_{\tilde{Z}})_*(\pi^*(K_X+\Delta)\vert_{\tilde{Z}})-(\pi\vert_{\tilde{Z}})_*((K_Y+\sum_{i\in I}\tilde{\Delta}_i)\vert_{\tilde{Z}})\\
			&=(\pi\vert_{\tilde{Z}})_*((\pi\vert_{\tilde{Z}})^*((K_X+\Delta)\vert_{{Z}}))-(\pi\vert_{\tilde{Z}})_*(K_{\tilde{Z}})\\
			&=(K_X+\Delta)\vert_{{Z}}-K_Z.
		\end{align*}
		%We remark that Koll\'ar \cite{KoMMP} only states this equality in the case that $Z$ is a divisor.	
\fi

The final ingredient we need to define the dlt zeta function is the stringy motive of a klt pair.
	\begin{defn}[Definition 3.7 of \cite{Ba2}, Definition 7.7 of \cite{Ve}] 	\label{def:stringy}
		Consider a klt pair $(X,\Delta)$. Let $Y\rightarrow X$ be a log resolution of $(X,\Delta)$ and let $E_i,i\in S$, be the prime divisors appearing in the exceptional locus or in the birational transform of $\Delta$ on $Y$.
 For every subset $I$ of $S$, we define $E_I=\cap_{i\in I}E_i$ and $\mathring{E}_I=E_I\setminus \cup_{j\notin I}E_j$.
		The {stringy motive} $\mathcal{E}_{st}(X,\Delta)$ of $(X,\Delta)$ is defined as
		\[
		\sum\limits_{I\subset S}[\mathring{E}_I]\prod\limits_{i\in I}\frac{\mathbb{L}-1}{\mathbb{L}^{a_i}-1},
		\]
		where $a_i$ is the log discrepancy $a(E_i,X,\Delta)$. Remark that $a_i>0$ for all $i\in S$, since $(X,\Delta)$ is klt. The stringy motive $\mathcal{E}_{st}(X,\Delta)$ is an element of $\tilde{\mathcal{M}}_k^e$, a suitable finite extension of the completed Grothendieck ring of varieties over $k$. Moreover, the stringy motive does not depend on the choice of log resolution.
	\end{defn}
	
	The precise definition of $\tilde{\mathcal{M}}_k^e$ will not be important for our purposes, we refer to \cite{Ba2} and \cite{Ve} for further background.

\subsection{The dlt motivic zeta function}		\label{defXu}
We can now recall Xu's definition of the dlt motivic zeta function (see Section 1 of \cite{Xu1}).
		Consider a non-constant regular function $f$ on a smooth quasi-projective $k$-variety $X$ and let $Z(f)$ be the closed subscheme of $X$ defined by $f=0$. Let
		$$g:(X^{dlt},Z(f)^{dlt}) \rightarrow (X,Z(f)_{red})$$ be a dlt modification. Let $E_i,i\in S$, be the prime divisors appearing in the exceptional locus or in the birational transform of $\Delta$ on $Y$.  Write
		$$g^*(Z(f))=\sum_{i\in S}N_iE_i,\quad K_{Y/X}=\sum_{i\in S}(\nu_i-1)E_i.$$ For every subset $I$ of $S$, we set $E_I=\cap_{i\in I}E_i$ and $\mathring{E}_I=E_I\setminus \cup_{j\notin I}E_j$.
		We denote the different $\mathrm{Diff}_{E_I}(Z(f)^{dlt})$ by $D_I$. This is a slight abuse of notation, since $E_I$ might consist of different connected components. In that case, we simply compute the different on each individual component and then add all of these differents together. We set $\mathring{D}_I=D_I\vert_{\mathring{E}_I}$; then $(\mathring{E}_I,\mathring{D}_I)$ is a klt pair.
		
		Finally, we define the dlt motivic zeta function of $f$ with respect to the dlt modification $g$ by
		\[
		Z^{dlt}_{f}((X^{dlt},Z(f)^{dlt}),s)=\L^{-\dim(X)}\sum\limits_{I\subseteq S} \mathcal{E}_{st}(\mathring{E}_I,\mathring{D}_I)\prod\limits_{i\in I}\frac{\mathbb{L}-1}{\mathbb{L}^{N_is+\nu_i}-1}\in \tilde{\mathcal{M}}_k^e[[\L^{-s}]]
		\]
		where $\tilde{\mathcal{M}}_k^e$ is the same ring as in Definition \ref{def:stringy}.
		%For more details, we refer to \cite{Xu1}.

	\section{Counterexamples}
	\label{counterexample}
	Consider the polynomial $f=x^4+x^2y^2+y^6+z^3\in\C[x,y,z]$. The surface cut out by $f$ has an isolated singularity at the origin. We will consider two dlt modifications $$g_i\colon (X_i,\Delta_i)\rightarrow (\A^3,Z(f)),\,i\in\{1,2\}$$ that are connected via a blow-up, and show that $Z^{dlt}_{f}((X_1,\Delta_1),s)\neq Z^{dlt}_{f}((X_2,\Delta_2),s)$. We will also construct two dlt modifications $$g_j\colon (X_j,\Delta_j)\rightarrow (\A^3,Z(f)),j\in\{A,B\}$$ that are connected via a flop, and show that $Z^{dlt}_{f}((X_A,\Delta_A),s)\neq Z^{dlt}_{f}((X_B,\Delta_B),s)$. These Examples contradict Propositions 3.2 and 3.1 in \cite{Xu1}, respectively (see Section \ref{sec:Xu}). In particular, they show that the dlt motivic zeta function depends on the choice of the dlt modification, contradicting Theorem 1.2 in \cite{Xu1}.
	
	The polynomial $f$ is non-degenerate at the origin (see Definition 1.19 of \cite{Kou}). Therefore, we can use toric methods to construct the relevant dlt-modifications.
	We start by fixing a few notational conventions.

		\begin{enumerate}
			\item[i.] Given a fan $\Sigma$, we denote the associated toric variety by $X(\Sigma)$.
			\item[ii.] We denote  by $\Sigma^3$ the standard fan of $\A^3$, so that $X(\Sigma^3) = \A^3$.
			\item[iii.] Consider $u_1,...,u_k \in \R^3$. Define
			\[
			\langle u_1,...,u_k \rangle_{\geq 0}=\left\{\sum_{i=1}^k\lambda_iu_i\mid \lambda_i\in\R_{\geq 0}\right\}.
			\]
			\item[iv.] We denote the standard inner product on $\R^3$ by $\langle \cdot , \cdot \rangle$.
			\item[v.] We denote the primitive generator of a ray $\rho_i$ by $u_i$.
			\item[vi.] Consider a cone $\sigma\in\Sigma$. We denote the open orbit of $\sigma$ by $O(\sigma)$ and the closed orbit by $V(\sigma)$. If $\rho_i$ is a ray, we set $E_i=V(\rho_i)$. If $\rho_1,...,\rho_k$ are rays, we denote $O(\langle u_1,...,u_k\rangle_{\geq 0})$ by $O(\rho_1,...,\rho_k)$ and $V(\langle u_1,...,u_k\rangle_{\geq 0})$ by $V(\rho_1,...,\rho_k)$.
			\item[vii.] Let $\pi:Y\rightarrow X$ be a birational morphism and $E\subset X$ a divisor. We denote the birational transform of $E$ by $\tilde{E}\subset Y$.
		\end{enumerate}
		
	\subsection{Example 1: dlt modifications connected via a blow-up}
	\label{blowup}
	Consider the fan $\Sigma_1$ as depicted in Figure $\ref{dltmodel1}$ (the picture shows the intersection of the fan with the plane described by $x+y+z=1$).
	
	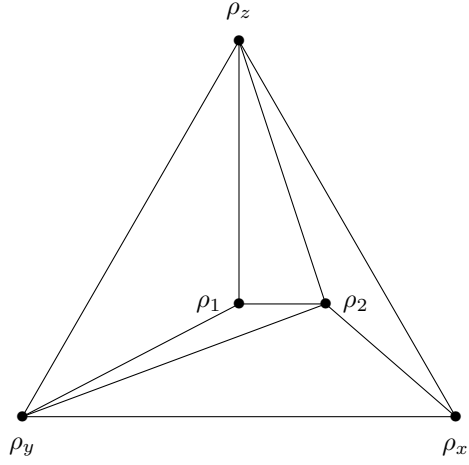
\begin{figure}[h!]
		\centering
		\begin{tikzpicture}[scale=2]
			\coordinate (ux) at (2.88/2,5/4);
			\coordinate (uy) at (-2.88/2,5/4);
			\coordinate (uz) at (0,15/4);
			\coordinate (u1) at (3*2.88/10-3*2.88/10,4*5/10); %(3,3,4) 1
			\coordinate (u2) at (2*2.88/5-1*2.88/5,2*5/5); %(2,1,2) 2
			%\coordinate (u3) at (2.88,0); reserve for new dlt model
			\coordinate (u4) at (1*2.88/4-1*2.88/4,2*5/4); %(1,1,2) 4
			\coordinate (u5) at (2*2.88/7-2*2.88/7,3*5/7); %(2,2,3) 5
			\coordinate (u6) at (1*2.88/3-1*2.88/3,1*5/3); %(1,1,1) 6
			
			\draw (u1) -- (u2);
			\draw (u1) -- (uy);
			\draw (u1) -- (uz);
			\draw (u2) -- (uz);
			\draw (u2) -- (ux);
			\draw (u2) -- (uy);
			\draw (uz) -- (ux);
			\draw (uz) -- (uy);
			\draw (ux) -- (uy);
			
			\node at (2.88/2,5/4) {$\small\bullet$};
			\node at (-2.88/2,5/4) {$\small\bullet$};			
			\node at (0,15/4) {$\small\bullet$};				
			\node at (3*2.88/10-3*2.88/10,4*5/10) {$\small\bullet$};
			\node at (2*2.88/5-1*2.88/5,2*5/5) {$\small\bullet$};

			\node at (2.88/2,5/4-0.2) {$\rho_x$};
			\node at (-2.88/2,5/4-0.2){$\rho_y$};			
			\node at (0,15/4+0.2) {$\rho_z$};				
			\node at (3*2.88/10-3*2.88/10-0.2,4*5/10) {$\rho_1$};
			\node at (2*2.88/5-1*2.88/5+0.2,2*5/5) {$\rho_2$};
		\end{tikzpicture}
		\caption{The fan $\Sigma_1$}
		\label{dltmodel1}
	\end{figure}
	
	Here, the set of rays $\Sigma_1(1)$ of $\Sigma_1$ consists of
 \begin{align*}
 &\rho_x=\langle(1,0,0)\rangle_{\geq 0},\qquad\rho_y=\langle(0,1,0)\rangle_{\geq 0},\qquad\rho_z=\langle(0,0,1)\rangle_{\geq 0},\\
 &\rho_1=\langle(3,3,4)\rangle_{\geq 0},\qquad\rho_2=\langle(2,1,2)\rangle_{\geq 0}.
 \end{align*}
 There is the natural morphism $g_1\colon X_1=X(\Sigma_1)\rightarrow \A^3$ that is induced by the refinement of fans $\Sigma^3\subset \Sigma_1$. Furthermore, $(N_1,\nu_1)=(12,10)$ and $(N_2,\nu_2)=(6,5)$.
	
	\begin{prop}
	The morphism $g_1\colon (X_1,\Delta_1)\rightarrow (\A^3,Z(f))$ is a dlt modification.
	\end{prop}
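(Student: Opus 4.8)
The plan is to check the three defining properties of a dlt modification in turn, using that $X_1$ is toric and $f$ is non-degenerate. For $\Q$-factoriality it is enough that $\Sigma_1$ is simplicial, and Figure \ref{dltmodel1} exhibits its five maximal cones $\langle\rho_1,\rho_y,\rho_z\rangle$, $\langle\rho_1,\rho_2,\rho_z\rangle$, $\langle\rho_1,\rho_2,\rho_y\rangle$, $\langle\rho_2,\rho_x,\rho_z\rangle$, $\langle\rho_2,\rho_x,\rho_y\rangle$, each generated by exactly three rays; so $X_1$ is $\Q$-factorial. Next I would record the combinatorics of the Newton polyhedron $\Gamma_f$ of $f$ at the origin, with vertices $(4,0,0),(2,2,0),(0,6,0),(0,0,3)$: one checks that $u_1=(3,3,4)$ and $u_2=(2,1,2)$ are inner normals of two compact facets of $\Gamma_f$, and that $\Sigma_1$ is \emph{compatible} with $\Gamma_f$, meaning that on each maximal cone of $\Sigma_1$ the three generators attain their $\Gamma_f$-minimum at a common vertex. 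Compatibility yields $g_1^*Z(f)=\widetilde{Z(f)}+N_1E_1+N_2E_2$ with $N_i=m_{\Gamma_f}(u_i)$ and $K_{X_1/\A^3}=(\nu_1-1)E_1+(\nu_2-1)E_2$ with $\nu_i=\langle u_i,(1,1,1)\rangle$, confirming the values $(N_i,\nu_i)$; since $E_1,E_2$ are precisely the $g_1$-exceptional prime divisors, $\Delta_1=\widetilde{Z(f)}+E_1+E_2$, and combining the above gives the crepancy identity $K_{X_1}+\Delta_1=g_1^*(K_{\A^3}+Z(f))-2E_1-E_2$.

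To prove that $(X_1,\Delta_1)$ is log canonical I would pass to a smooth refinement $\Sigma'\supseteq\Sigma_1$ that is still compatible with $\Gamma_f$. Then $h\colon Y=X(\Sigma')\to X_1$ is a log resolution of $(X_1,\Delta_1)$ on which $\widetilde{Z(f)}_Y+\partial X(\Sigma')$ has simple normal crossings, by the embedded toric resolution of the non-degenerate singularity $Z(f)$; this is the point where non-degeneracy of $f$ enters for log canonicity. Compatibility of $\Sigma_1$ with $\Gamma_f$ forces $h^*\widetilde{Z(f)}=\widetilde{Z(f)}_Y$, so the log discrepancy over $(X_1,\Delta_1)$ of a toric divisor $E'_\rho$ equals its log discrepancy over the toric pair $(X_1,E_1+E_2)$, namely $\Psi(u_\rho)$, where $\Psi$ is the function linear on the cones of $\Sigma_1$ with $\Psi(u_x)=\Psi(u_y)=\Psi(u_z)=1$ and $\Psi(u_1)=\Psi(u_2)=0$. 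As $\Psi$ is non-negative at the generators of each maximal cone, $\Psi\geq 0$ on $\R^3_{\geq 0}$, so all these log discrepancies are $\geq 0$; by the standard reduction of log canonicity to a single log resolution, $(X_1,\Delta_1)$ is log canonical.

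For the dlt property I would then use that $\Psi$ vanishes exactly on the two-dimensional face $\langle\rho_1,\rho_2\rangle$, so that the log canonical centers of $(X_1,\Delta_1)$ are the connected components of the intersections of the coefficient-one components $\widetilde{Z(f)},E_1,E_2$. It remains to check that the generic point of each such center lies in the snc locus of $(X_1,\Delta_1)$; equivalently, that it is a smooth point of $X_1$ at which $\Delta_1$ has strict normal crossings. For $\widetilde{Z(f)},E_1,E_2$ this is clear (the generic point lies in a big torus or in $O(\rho_i)$, where the one-dimensional cones $\rho_i$ are smooth); for $E_1\cap E_2$ the two-dimensional cone $\langle\rho_1,\rho_2\rangle$ is smooth; and for $\widetilde{Z(f)}\cap E_i$ and the finite set $\widetilde{Z(f)}\cap E_1\cap E_2$ one invokes non-degeneracy: the relevant face polynomial of $f$ is non-degenerate, so $\widetilde{Z(f)}$ is smooth and transverse to the toric boundary there, and in particular $\widetilde{Z(f)}\cap E_1\cap E_2$ avoids the two torus-fixed points of the curve $E_1\cap E_2$, which correspond to vertices of $\Gamma_f$ whose face polynomials are single monomials with nonzero coefficient, hence avoid the singular points of $X_1$. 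This shows $(X_1,\Delta_1)$ is dlt.

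Finally, for $g_1$-nefness, the curves contracted by $g_1$ are the $V(\tau)$ for the six two-dimensional cones $\tau$ of $\Sigma_1$ containing $\rho_1$ or $\rho_2$, and their classes span $\overline{NE}(X_1/\A^3)$; since $g_1^*(D)\cdot V(\tau)=0$ for any divisor $D$ on $\A^3$, the crepancy identity reduces $(K_{X_1}+\Delta_1)\cdot V(\tau)\geq 0$ to $(2E_1+E_2)\cdot V(\tau)\leq 0$, and writing $2E_1+E_2=g_1^*(\tfrac13 D_x+\tfrac13 D_y)-\tfrac13 E_x-\tfrac13 E_y$ reduces it further to $(E_x+E_y)\cdot V(\tau)\geq 0$; this I would verify wall by wall from the Mori relation among the generators of the two maximal cones adjacent to $\tau$ (strictly positive in five cases and exactly zero for $\tau=\langle\rho_2,\rho_y\rangle$). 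The main obstacle is the dlt step: establishing log canonicity, and above all showing that neither the singular locus of $X_1$ nor the tangency locus of $\widetilde{Z(f)}$ contains a log canonical center. Both rest on non-degeneracy of $f$—via a $\Gamma_f$-compatible embedded resolution for the first and via smoothness of the face polynomials in their tori for the second—so the bookkeeping of which faces of $\Gamma_f$ the cones of $\Sigma_1$ are dual to is the part that requires the most care.
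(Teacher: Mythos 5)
Your argument reaches the same conclusion, and at its computational core it is the same toric support-function calculation, but it is packaged quite differently from the paper's. The paper constructs a concrete regular refinement $\Sigma^{res}$ (adding the three rays $\rho_4,\rho_5,\rho_6$), reads the log discrepancies $a_4=\tfrac23$, $a_5=\tfrac13$, $a_6=\tfrac12$ off the difference $F_{res}-\pi^*F_1$ of support functions, exhibits the snc open set $U=X_1\setminus\bigl(V(\rho_y,\rho_2)\cup V(\rho_z,\rho_1)\bigr)$, and proves nefness by checking the wall inequalities $\phi_1(u_0)\le\langle m_\sigma,u_0\rangle$. You instead argue abstractly via compatibility of $\Sigma_1$ with the Newton polyhedron $\Gamma_f$: compatibility gives $h^*\widetilde{Z(f)}=\widetilde{Z(f)}_Y$ for any compatible smooth refinement, so the log discrepancies over $(X_1,\Delta_1)$ are those of the purely toric pair $(X_1,E_1+E_2)$, encoded in the piecewise-linear function $\Psi$ (which one can check matches the paper's numbers: $\Psi(u_4)=\tfrac23$, $\Psi(u_5)=\tfrac13$, $\Psi(u_6)=\tfrac12$); non-negativity of $\Psi$ gives log canonicity, its zero set $\langle\rho_1,\rho_2\rangle$ identifies the log canonical centers, and non-degeneracy places them in the snc locus. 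For nefness you rewrite $2E_1+E_2$ as $g_1^*(\tfrac13 D_x+\tfrac13 D_y)-\tfrac13(E_x+E_y)$ and reduce to $(E_x+E_y)\cdot V(\tau)\ge 0$ via Mori relations at the six interior walls — a dual formulation of the paper's convexity check. The gain of your route is conceptual transparency (the single fact ``$\Sigma_1$ is $\Gamma_f$-compatible'' drives everything) and independence of a particular choice of $\Sigma^{res}$; the cost is a heavier reliance on standard but unproved inputs, in particular the characterization of lc centers of a log-smooth pair as strata of the multiplicity-one components, the observation that a $\Gamma_f$-compatible smooth refinement exists that need not subdivide the (already smooth) cone $\langle\rho_1,\rho_2\rangle$, and the embedded-resolution consequences of non-degeneracy. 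The paper's explicit computation keeps the argument self-contained at the price of more ad hoc bookkeeping.
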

	\begin{proof}
		First, we show that the pair $(X_1,\Delta_1)$ is dlt. Consider the fan $\Sigma^{res}$ in Figure $\ref{resolution}$. Here, $u_4=(1,1,2)$, $u_5=(2,2,3)$ and $u_6=(1,1,1)$. We compute that $(N_4,\nu_4)=(4,4)$, $(N_5,\nu_5)=(8,7)$ and $(N_6,\nu_6)=(3,3)$.
		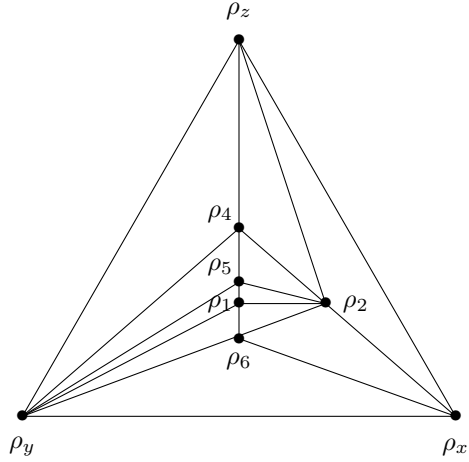
\begin{figure}[h!]
			\centering
			\begin{tikzpicture}[scale=2]
				\coordinate (ux) at (2.88/2,5/4);
				\coordinate (uy) at (-2.88/2,5/4);
				\coordinate (uz) at (0,15/4);
				\coordinate (u1) at (3*2.88/10-3*2.88/10,4*5/10); %(3,3,4) 1
				\coordinate (u2) at (2*2.88/5-1*2.88/5,2*5/5); %(2,1,2) 2
				\coordinate (u4) at (1*2.88/4-1*2.88/4,2*5/4); %(1,1,2) 4
				\coordinate (u5) at (2*2.88/7-2*2.88/7,3*5/7); %(2,2,3) 5
				\coordinate (u6) at (1*2.88/3-1*2.88/3,1*5/3+0.1); %(1,1,1) 6
				
				\draw (u1) -- (u2);
				\draw (u1) -- (uy);
				\draw (u1) -- (uz);
				\draw (u2) -- (uz);
				\draw (u2) -- (ux);
				\draw (u2) -- (uy);
				\draw (uz) -- (ux);
				\draw (uz) -- (uy);
				\draw (ux) -- (uy);
				\draw (u4) -- (u2);
				\draw (u5) -- (u2);
				\draw (u1) -- (u6);
				\draw (u6) -- (ux);
				\draw (uy) -- (u4);
				\draw (uy) -- (u5);
				
				\node at (2.88/2,5/4) {$\small\bullet$};
				\node at (-2.88/2,5/4) {$\small\bullet$};			
				\node at (0,15/4) {$\small\bullet$};				
				\node at (3*2.88/10-3*2.88/10,4*5/10) {$\small\bullet$};
				\node at (2*2.88/5-1*2.88/5,2*5/5) {$\small\bullet$};
				\node at (1*2.88/4-1*2.88/4,2*5/4) {$\small\bullet$}; %(1,1,2) 4
				\node at (2*2.88/7-2*2.88/7,3*5/7) {$\small\bullet$}; %(2,2,3) 5
				\node at (1*2.88/3-1*2.88/3,1*5/3+0.1) {$\small\bullet$}; %(1,1,1) 6
				
				\node at (2.88/2,5/4-0.2) {$\rho_x$};
				\node at (-2.88/2,5/4-0.2){$\rho_y$};			
				\node at (0,15/4+0.2) {$\rho_z$};				
				\node at (3*2.88/10-3*2.88/10-0.125,4*5/10) {$\rho_1$};
				\node at (2*2.88/5-1*2.88/5+0.2,2*5/5) {$\rho_2$};
				\node at (1*2.88/4-1*2.88/4-0.125,2*5/4+0.1) {$\rho_4$}; %(1,1,2) 4
				\node at (2*2.88/7-2*2.88/7-0.125,3*5/7+0.1) {$\rho_5$}; %(2,2,3) 5
				\node at (1*2.88/3-1*2.88/3,1*5/3-0.25+0.2) {$\rho_6$}; %(1,1,1) 6
			\end{tikzpicture}
			\caption{The fan $\Sigma^{res}$}
			\label{resolution}
		\end{figure}
		The induced morphism $\pi\colon Y=X(\Sigma^{res})\rightarrow X_1$ is a log resolution of $(X_1,\Delta_1)$ because $\Sigma^{res}$ is regular (see Section 9 and 10 of \cite{Va}).
Denote the birational transform of $Z(f)$ on $X_1$ by $E_0$, and on $X_{res}$ by $\tilde{E_0}$, respectively. Then
\[K_{X_1}+\Delta_1\equiv\nu_1 E_1+ \nu_2 E_2 + E_0 \equiv (\nu_1-N_1)E_1+(\nu_2-N_2)E_2 = -2E_1-E_2=:F_1, \]
since $N_1E_1+N_2E_2+ E_0$ is linearly equivalent to the zero divisor.
% 			The divisor $K_{X_1}+\Delta_1$ is linearly equivalent to
%\begin{eqnarray*}
%	F_1 &= & 	\nu_1 E_1+ \nu_2 E_2 + E_0  \\
%  &= & (\nu_1-N_1)E_1+(\nu_2-N_2)E_2 = -2E_1-E_2 ,
%		\end{eqnarray*}	
%since $N_1E_1+N_2E_2+ E_0$ is linearly equivalent to the zero divisor. 

Analogously, we have
\begin{eqnarray*}
	&&\nu_1E_1+\nu_2E_2+\nu_4E_4+\nu_5E_5+\nu_6E_6 + \tilde{E_0} \\
	&\equiv& (\nu_1-N_1)E_1+(\nu_2-N_2)E_2+(\nu_4-N_4)E_4+(\nu_5-N_5)E_5+(\nu_6-N_6)E_6\\
	&=&-2E_1-E_2-E_5=:F_{res}.
\end{eqnarray*}	

% Analogously, the divisor $K_{Y}+\tilde{\Delta}_1+\mathrm{Exc}(\pi)$ is linearly equivalent to
%		\begin{eqnarray*}
%		F_{res}&=& \nu_1E_1+\nu_2E_2+\nu_4E_4+\nu_5E_5+\nu_6E_6 + \tilde{E_0} \\
%&=& (\nu_1-N_1)E_1+(\nu_2-N_2)E_2+(\nu_4-N_4)E_4\\& & +(\nu_5-N_5)E_5+(\nu_6-N_6)E_6=-2E_1-E_2-E_5.
%		\end{eqnarray*}	
		Consider the support function $\phi_1:|\Sigma_1|\rightarrow\R$ that is associated to $F_1$, that is, the function that satisfies
		\[
		\phi_1(u_x)=\phi_1(u_y)=\phi_1(u_z)=0, \phi_1(u_1)=2,  \phi_1(u_2)=1
		\]
		and that is linear on the cones of $\Sigma_1$. Similarly, let $\phi_{res}:|\Sigma_{res}|\rightarrow\R$ be the support function corresponding to $F_{res}$.
	 			Then we have
		\begin{eqnarray*}
			& &K_{X(\Sigma^{res})}+\tilde{\Delta}_1+\mathrm{Exc}(\pi)-\pi^*(K_{X_1}+\Delta_1)\\
			%&=&F_{res}+\sum_{\substack{i=1\\i\neq 3}}^6N_iE_i+\tilde{E_0}-\pi^*(F_1+\sum_{i=1}^{2}N_iE_i+E_0)\\	
			&=&F_{res}-\pi^*(F_1)\\
			&=&\sum_{\substack{i=1\\i\neq 3}}^6\phi_{res}(u_i)E_i-\sum_{\substack{i=1\\i\neq 3}}^6\phi_1(u_i)E_i\\
			&=&\frac{2}{3}E_4+\frac{1}{3}E_5+\frac{1}{2}E_6.
		\end{eqnarray*}
		Hence, the log discrepancies of $E_4$, $E_5$ and $E_6$ are $a_4=\frac{2}{3}$, $a_5=\frac{1}{3}$ and $a_6=\frac{1}{2}$, respectively.
		
	\smallskip	
		Finally, consider the closed set $$C=V(\rho_y,\rho_2)\cup V(\rho_z,\rho_1)\subset X_1$$ and write $U=X_1\setminus C$. Then $U$ is a open set such that $(U,\Delta_1\vert_U)$ is snc since the corresponding cones are regular. Furthermore, no log canonical center of $(X_1,\Delta_1)$ is contained in $X_1\setminus U=C$. Therefore, the pair $(X_1,\Delta_1)$ is dlt.
		
Next, we show that the divisor $K_{X_1}+\Delta_1$ is $g_1$-nef. By Theorem 6.1.7 of \cite{CLS}, it suffices to show that $\phi_1$ is convex.
		For each maximal cone $\sigma$ of $\Sigma$, define $m_\sigma\in\R^3$ such that $\phi_1(x)=\langle m_\sigma,x\rangle$ for all $x\in \sigma$. By Lemma 6.1.5 of \cite{CLS}, it is enough to show that for every wall $\tau=\sigma\cap\sigma'$, where $\sigma$ and $\sigma'$ are maximal cones of $\Sigma_1$, there is some $u_0\in\sigma'\setminus\sigma$ with $\phi_1(u_0)\leq \langle m_\sigma,u_0 \rangle$.
		We introduce the following notation:
				\[\begin{array}{lllclllclll}
			\sigma_1&=&\langle \rho_x,\rho_y,\rho_2 \rangle_{\geq 0},&  &
			\sigma_2&=&\langle \rho_x,\rho_z,\rho_2 \rangle_{\geq 0},&  &
			\sigma_3&=&\langle \rho_y,\rho_z,\rho_1 \rangle_{\geq 0}, \\[1.5ex]
			\sigma_4&=&\langle \rho_y,\rho_1,\rho_2 \rangle_{\geq 0},&  &
			\sigma_5&=&\langle \rho_z,\rho_1,\rho_2 \rangle_{\geq 0}. & & & &
		\end{array}\]
		One computes that
		\[\begin{array}{lllclllclll}
		m_{\sigma_1}&=& (0,0,\frac{1}{2}),&\qquad&
		m_{\sigma_2}&=&(0,1,0),&\qquad&
		m_{\sigma_3}&=&(\frac{2}{3},0,0) ,\\[1.5ex]
		m_{\sigma_4}&=&(0,0,\frac{1}{2}),&\qquad&
		m_{\sigma_5}&=&(\frac{1}{3},\frac{1}{3},0).& & & &
		\end{array}\]
		In this example, there are six walls, namely,
				\[\begin{array}{lllclllclll}
		\tau_1&=& \sigma_1 \cap \sigma_2,&\quad &
		\tau_2&=&\sigma_1 \cap \sigma_4,&\quad &
		\tau_3&=&\sigma_2 \cap \sigma_5 ,\\
		\tau_4&=&\sigma_3 \cap \sigma_4, &\quad &
		\tau_5&=&\sigma_3 \cap \sigma_5, &\quad &
		\tau_6&=&\sigma_4 \cap \sigma_5.
		\end{array}\]
		Consider the wall $\tau_1$ and consider any point $u_0$ in $\sigma_2\setminus \sigma_1$, e.g. $u_0=u_x+u_z+u_2$. Compute that $\phi_1(u_0)=0+0+1=1$ and
		\[
		\langle m_{\sigma_1},u_0 \rangle = \langle (0,0,\frac{1}{2}),(3,1,3) \rangle = \frac{3}{2}\geq 1.
		\]			
		The other wall-inequalities are verified in the same way. This shows that $K_{X_1}+\Delta_1$ is $g_1$-nef. Therefore, $g_1$ is indeed a dlt modification.
	\end{proof}
	
	To construct our second dlt-modification, we consider the fan $\Sigma_2$ as depicted in Figure $\ref{dltmodel2}$.
	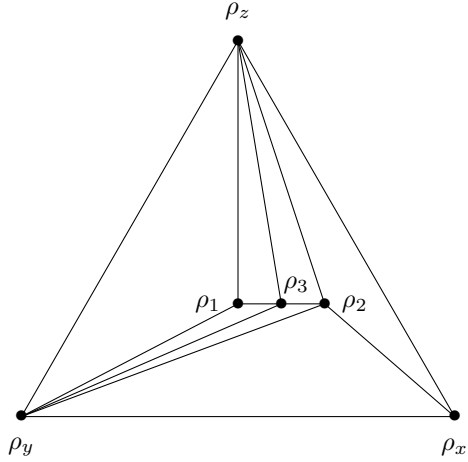
\begin{figure}[h!]
		\centering
		\begin{tikzpicture}[scale=2]
			\coordinate (ux) at (2.88/2,5/4);
			\coordinate (uy) at (-2.88/2,5/4);
			\coordinate (uz) at (0,15/4);
			\coordinate (u1) at (3*2.88/10-3*2.88/10,4*5/10); %(3,3,4) 1
			\coordinate (u2) at (2*2.88/5-1*2.88/5,2*5/5); %(2,1,2) 2
			\coordinate (u3) at (3*2.88/20-3*2.88/20+2*2.88/10-1*2.88/10,4*5/20+2*5/10); %(5,4,6) 3
			\coordinate (u4) at (1*2.88/4-1*2.88/4,2*5/4); %(1,1,2) 4
			\coordinate (u5) at (2*2.88/7-2*2.88/7,3*5/7); %(2,2,3) 5
			\coordinate (u6) at (1*2.88/3-1*2.88/3,1*5/3); %(1,1,1) 6
			
			\draw (u1) -- (u2);
			\draw (u1) -- (uy);
			\draw (u1) -- (uz);
			\draw (u2) -- (uz);
			\draw (u2) -- (ux);
			\draw (u2) -- (uy);
			\draw (uz) -- (ux);
			\draw (uz) -- (uy);
			\draw (ux) -- (uy);
			\draw (u3) -- (uy);
			\draw (u3) -- (uz);
			
			\node at (2.88/2,5/4) {$\small\bullet$};
			\node at (-2.88/2,5/4) {$\small\bullet$};			
			\node at (0,15/4) {$\small\bullet$};		
			\node at (3*2.88/10-3*2.88/10,4*5/10) {$\small\bullet$};
			\node at (2*2.88/5-1*2.88/5,2*5/5) {$\small\bullet$};
			\node at (3*2.88/20-3*2.88/20+2*2.88/10-1*2.88/10,4*5/20+2*5/10) {$\small\bullet$}; %(5,4,6) 3
	
			\node at (2.88/2,5/4-0.2) {$\rho_x$};
			\node at (-2.88/2,5/4-0.2){$\rho_y$};			
			\node at (0,15/4+0.2) {$\rho_z$};					
			\node at (3*2.88/10-3*2.88/10-0.2,4*5/10) {$\rho_1$};
			\node at (2*2.88/5-1*2.88/5+0.2,2*5/5) {$\rho_2$};
			\node at (3*2.88/20-3*2.88/20+2*2.88/10-1*2.88/10+0.1,4*5/20+2*5/10+0.125) {$\rho_3$}; %(5,4,6) 3
		\end{tikzpicture}
		\caption{The fan $\Sigma_2$}
		\label{dltmodel2}
	\end{figure}
	
	Remark that $$\Sigma_2(1)=\Sigma_1(1)\cup\{\rho_3=\langle (5,4,6) \rangle_{\geq 0}\}.$$ Furthermore, there is a natural morphism \[\gamma\colon X_2=X(\Sigma_2)\rightarrow X_1\] that is induced by the refinement $\Sigma_1\subset\Sigma_2$. If we restrict to the open set $U$ from before, the map $\gamma$ is the blow-up of $U$ in the centre $V(\rho_1,\rho_2)\cap U$. Define $g_2=\gamma\circ g_1: X_2\rightarrow \A^3$. This is the toric morphism induced by the refinement $\Sigma^3\subset \Sigma_2$.
	
	\begin{prop}
	The morphism $g_2\colon (X_2,\Delta_2)\rightarrow (\A^3,Z(f))$ is a dlt modification.
	\end{prop}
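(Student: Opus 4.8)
The plan is to follow the three-part structure of the proof for $g_1$, but to shortcut nearly all of it by observing that $\gamma\colon X_2\to X_1$ is a \emph{crepant} modification that is, over $U$, just the blow-up of a log canonical center of $(X_1,\Delta_1)$. Since every cone of $\Sigma_2$ is simplicial, $X_2$ is $\Q$-factorial, so condition (i) of the definition holds and $K_{X_2}+\Delta_2$ is automatically $\Q$-Cartier. For the key point, recall that $\Sigma_2$ is the star subdivision of $\Sigma_1$ at the ray $\rho_3$ with $u_3=u_1+u_2$, so that (as already noted in the excerpt) over $U$ the map $\gamma$ is the blow-up of the smooth codimension-$2$ center $V(\rho_1,\rho_2)\cap U=(E_1\cap E_2)\cap U$, a subvariety contained in exactly the two prime components $E_1,E_2$ of $\Delta_1$, each with coefficient $1$. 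Hence its exceptional divisor $E_3$ has log discrepancy $a(E_3,X_1,\Delta_1)=2-1-1=0$, and since $E_3$ is the only $\gamma$-exceptional divisor we get $\gamma^*(K_{X_1}+\Delta_1)=K_{X_2}+\Delta_2$. (As a check consistent with the toric computation in the previous proof, using $(N_3,\nu_3)=(18,15)$ one finds $K_{X_2}+\Delta_2\equiv -2E_1-E_2-3E_3$, and the support function of this divisor on $\Sigma_2$ equals the function $\phi_1$, because $\phi_1$ is linear on $\sigma_4\supset\langle\rho_1,\rho_2\rangle_{\geq 0}$ and $\phi_1(u_3)=\phi_1(u_1)+\phi_1(u_2)=3$.)

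Granting this, condition (iii) and the log canonicity half of (ii) come for free. For (iii): for an irreducible curve $C\subset X_2$ contracted by $g_2=g_1\circ\gamma$, the projection formula gives $(K_{X_2}+\Delta_2)\cdot C=(K_{X_1}+\Delta_1)\cdot\gamma_*C\geq 0$, since $\gamma_*C$ is either $0$ or a curve contracted by $g_1$, and $K_{X_1}+\Delta_1$ is $g_1$-nef. For log canonicity: the only divisor over $X_2$ that is not already a divisor over $X_1$ is $E_3$, which appears in $\Delta_2$ with coefficient $1$, and by crepancy every other divisor over $X_2$ has the same discrepancy with respect to $(X_2,\Delta_2)$ as with respect to the log canonical pair $(X_1,\Delta_1)$; hence $(X_2,\Delta_2)$ is log canonical.

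It then remains to produce a snc open subset meeting every log canonical center. I would take $U_2\subset X_2$ to be the maximal open set on which $X_2$ is smooth and $\Delta_2$ is snc. Over $U$ the center of $\gamma$ is smooth of codimension $2$ and lies in exactly two snc components of $\Delta_1$ and in none of the others — one checks, using the non-degeneracy of $f$ and the Newton polyhedron, that $E_0=\widetilde{Z(f)}$ is disjoint from $V(\rho_1,\rho_2)$ — so $\gamma^{-1}(U)$ is smooth with $\Delta_2|_{\gamma^{-1}(U)}$ snc; thus $\gamma^{-1}(U)\subset U_2$. The log canonical centers of $(X_2,\Delta_2)$ are $E_3$ together with the birational transforms of the log canonical centers of $(X_1,\Delta_1)$. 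Now $E_3$ meets $\gamma^{-1}(U)$ because $(E_1\cap E_2)\cap U\neq\varnothing$; and among the log canonical centers of $(X_1,\Delta_1)$ the only one contained in the blow-up center $E_1\cap E_2$ is $E_1\cap E_2$ itself — which is ``replaced'' by $E_3$ — while every other one meets $U\setminus(E_1\cap E_2)$, over which $\gamma$ is an isomorphism. Hence every log canonical center of $(X_2,\Delta_2)$ meets $U_2$; in particular the new strata $E_1\cap E_3$ and $E_2\cap E_3$ are curves whose generic points lie on the smooth cones $\langle\rho_1,\rho_3\rangle_{\geq 0}$, resp.\ $\langle\rho_2,\rho_3\rangle_{\geq 0}$, and $E_1\cap E_2=\varnothing$ on $X_2$. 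Therefore $(X_2,\Delta_2)$ is dlt and $g_2$ is a dlt modification.

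The part that I expect to require the most care is this last one. Unlike for $\Sigma_1$, the star subdivision at $\rho_3$ introduces new torus-fixed points where $X_2$ is singular — the vertices of the four maximal cones $\langle\rho_y,\rho_1,\rho_3\rangle_{\geq 0}$, $\langle\rho_y,\rho_2,\rho_3\rangle_{\geq 0}$, $\langle\rho_z,\rho_1,\rho_3\rangle_{\geq 0}$, $\langle\rho_z,\rho_2,\rho_3\rangle_{\geq 0}$ — some lying on $E_1$, $E_2$ and on $E_1\cap E_3$, $E_2\cap E_3$, and one must make sure that none of these bad points (nor the singular curves $V(\rho_y,\rho_2)$, $V(\rho_z,\rho_1)$ already present in $X_1$) contains a log canonical center of $(X_2,\Delta_2)$. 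This is a finite bookkeeping with the cones of $\Sigma_2$, but together with the verification that $E_0$ stays transverse to the toric divisors after the blow-up, it is the only place where the argument genuinely differs from the one for $g_1$.
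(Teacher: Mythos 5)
Your route to $g_2$-nefness — crepancy of $\gamma$ plus the projection formula — is genuinely different from, and more conceptual than, the paper's proof, which simply observes that the support function $\phi_2$ of $K_{X_2}+\Delta_2$ on $\Sigma_2$ coincides with $\phi_1$ (because $\phi_1$ is linear on $\langle\rho_1,\rho_2\rangle_{\geq 0}$ and $\phi_1(u_3)=\phi_1(u_1)+\phi_1(u_2)=3$) and is therefore convex; you include that check as a consistency remark. Your crepancy argument is correct, cleanly gives log canonicity as well, and makes explicit what the paper leaves implicit, namely that $X_1$ and $X_2$ are crepant birational — the very point that makes the counterexample interesting.

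However, the verification of the dlt property contains a concrete factual error. You assert that $E_0=\widetilde{Z(f)}$ is disjoint from $V(\rho_1,\rho_2)$. This is false: $E_0\cap E_1\cap E_2$ is a single point lying in the open orbit $O(\rho_1,\rho_2)$, exactly as the paper records in the computation of $Z_1(s)$, where $[O(\rho_1,\rho_2)\cap E_0]=1$ and $\mathcal{E}_{st}(\mathring{E}_{0,1,2},\mathring{D}_{0,1,2})=1$; the later computations $\mathcal{E}_{st}(\mathring{E}_{0,1,3},\mathring{D}_{0,1,3})=\mathcal{E}_{st}(\mathring{E}_{0,2,3},\mathring{D}_{0,2,3})=1$ likewise show that $E_0$ meets $E_3$ nontrivially. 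Consequently, your claim that the only lc center of $(X_1,\Delta_1)$ contained in the blow-up center is $E_1\cap E_2$ itself is also wrong (the point $E_0\cap E_1\cap E_2$ is another), and your enumeration of the lc centers of $(X_2,\Delta_2)$ omits those involving both $E_0$ and $E_3$ (such as $E_0\cap E_3$, $E_0\cap E_1\cap E_3$, $E_0\cap E_2\cap E_3$). None of this kills the argument: the blow-up of a smooth stratum of an snc boundary preserves the snc property even when further boundary components meet the center transversally, so $\gamma^{-1}(U)$ is indeed smooth with snc boundary and meets every lc center of $(X_2,\Delta_2)$, including the new strata on $E_3$. But you should replace the appeal to a (false) disjointness and the incomplete lc-center bookkeeping with this correct statement about blow-ups of snc strata.
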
\begin{proof}
	It is easy to check that $(X_2,\Delta_2)$ is a dlt pair. Furthermore, $K_{X_2}+\Delta_2$ is $g_2$-nef since the associated support function $\phi_2$ is equal to $\phi_1$ and therefore convex. Indeed, $\phi_2(u_3)=N_3-\nu_3=3$ by construction and it is clear that $\phi_1(u_3)=3$ as well.
\end{proof}	
	
 We now compute the dlt zeta functions of $f$ associated to each of our dlt modifications.
	The morphism
	\[
	\gamma\vert_{X_2\setminus E_3}\colon X_2\setminus E_3\rightarrow X_1\setminus (E_1\cap E_2)
	\]
	is an isomorphism.
	Therefore, we only need to consider the contribution of strata that are contained in $E_3$ on the level of $X_2$ and strata that are contained in $E_1\cap E_2$ on the level of $X_1$.
	First, we focus on $Z_{f}^{dlt}((X_1,\Delta_1),s)$. The relevant part is
	\begin{align*}
		Z_1(s)&=\L^{-3}  \Bigl( \mathcal{E}_{st}(\mathring{E}_{1,2},\mathring{D}_{1,2})\frac{\mathbb{L}-1}{\mathbb{L}^{N_1s+\nu_1}-1}\frac{\mathbb{L}-1}{\mathbb{L}^{N_2s+\nu_2}-1}\\&+\mathcal{E}_{st}(\mathring{E}_{0,1,2},\mathring{D}_{0,1,2})\frac{\mathbb{L}-1}{\mathbb{L}^{N_0s+\nu_0}-1}\frac{\mathbb{L}-1}{\mathbb{L}^{N_1s+\nu_1}-1}\frac{\mathbb{L}-1}{\mathbb{L}^{N_2s+\nu_2}-1}\Bigr).
	\end{align*}
	Here, $E_0$ stands for the birational transform of $Z(f)$ in $X_1$. We have $(N_1,\nu_1)=(12,10)$, $(N_2,\nu_2)=(6,5)$ and $(N_0,\nu_0)=(1,1)$.
	We also observe that
	\[
	\mathring{E}_{1,2}=(O(\rho_1,\rho_2)\setminus E_0)\cup O(\rho_1,\rho_2,\rho_y)\cup O(\rho_1,\rho_2,\rho_z).
	\]
	For the log resolution $\pi\colon Y\rightarrow X$ constructed above, we
	 obtain that
	\begin{align*}
		D_{1,2}&=(\pi\vert_{\tilde{E}_1\cap\tilde{E}_2})_*(((\pi^*(K_{X_1}+\Delta_1)-K_Y-\tilde{E}_1-\tilde{E}_2))\vert_{\tilde{E}_1\cap \tilde{E}_2})\\
		&=(\pi\vert_{\tilde{E}_1\cap\tilde{E}_2})_*((\tilde{E}_0-(a_4-1)E_4-(a_5-1)E_5-(a_6-1)E_6))\vert_{\tilde{E}_1\cap \tilde{E}_2})\\
		&=O(\rho_1,\rho_2)\cap E_0+(1-a_6)\cdot O(\rho_1,\rho_2,\rho_y)+(1-a_5)\cdot O(\rho_1,\rho_2,\rho_z)\\
		&=O(\rho_1,\rho_2)\cap E_0+\frac{1}{2}O(\rho_1,\rho_2,\rho_y) + \frac{2}{3} O(\rho_1,\rho_2,\rho_z).
	\end{align*}
	Therefore,
	\[
	\mathring{D}_{1,2}=D_{1,2}\vert_{\mathring{E}_{1,2}}=\frac{1}{2}O(\rho_1,\rho_2,\rho_y) + \frac{2}{3} O(\rho_1,\rho_2,\rho_z).
	\]
The intersection $O(\rho_1,\rho_2)\cap E_0$ consists of a single point, so that $D_{0,1,2}=0$ and $[O(\rho_1,\rho_2)\cap E_0]=1.$
	Combining all the above, we compute
	\[
	\mathcal{E}_{st}(\mathring{E}_{1,2},\mathring{D}_{1,2})=\L-2+\frac{\L-1}{\L^{\nicefrac{1}{2}}-1}+\frac{\L-1}{\L^{\nicefrac{1}{3}}-1} \quad \mbox{and} \quad \mathcal{E}_{st}(\mathring{E}_{0,1,2},\mathring{D}_{0,1,2})=1.
	\]
	Therefore, we obtain
	\begin{align*}
		Z_1(s)&=\L^{-3}  \Bigl( \Bigl(\L-2+\frac{\L-1}{\L^{\nicefrac{1}{2}}-1}+\frac{\L-1}{\L^{\nicefrac{1}{3}}-1}\Bigr)\cdot\frac{\mathbb{L}-1}{\mathbb{L}^{N_1s+\nu_1}-1}\frac{\mathbb{L}-1}{\mathbb{L}^{N_2s+\nu_2}-1}\\&+1\cdot\frac{\mathbb{L}-1}{\mathbb{L}^{N_0s+\nu_0}-1}\frac{\mathbb{L}-1}{\mathbb{L}^{N_1s+\nu_1}-1}\frac{\mathbb{L}-1}{\mathbb{L}^{N_2s+\nu_2}-1}\Bigr).
	\end{align*}
	
	Next, we compute the relevant part $Z_2(s)$ of $Z^{dlt}_{f}((X_2,D_2),s)$, that is,
	\begin{align*}
		Z_2(s)&=\L^{-3} \Bigl( \mathcal{E}_{st}(\mathring{E}_{3},\mathring{D}_{3})\frac{\mathbb{L}-1}{\mathbb{L}^{N_3s+\nu_3}-1}+\mathcal{E}_{st}(\mathring{E_{0,3}},\mathring{D}_{0,3})\frac{\mathbb{L}-1}{\mathbb{L}^{N_0s+\nu_0}-1}\frac{\mathbb{L}-1}{\mathbb{L}^{N_3s+\nu_3}-1}\\
		&+\mathcal{E}_{st}(\mathring{E}_{1,3},\mathring{D}_{1,3})\frac{\mathbb{L}-1}{\mathbb{L}^{N_1s+\nu_1}-1}\frac{\mathbb{L}-1}{\mathbb{L}^{N_3s+\nu_3}-1}\\
		&+\mathcal{E}_{st}(\mathring{E}_{0,1,3},\mathring{D}_{0,1,3})\frac{\mathbb{L}-1}{\mathbb{L}^{N_0s+\nu_0}-1}\frac{\mathbb{L}-1}{\mathbb{L}^{N_1s+\nu_1}-1}\frac{\mathbb{L}-1}{\mathbb{L}^{N_3s+\nu_3}-1}\\
		&+\mathcal{E}_{st}(\mathring{E}_{2,3},\mathring{D}_{2,3})\frac{\mathbb{L}-1}{\mathbb{L}^{N_2s+\nu_2}-1}\frac{\mathbb{L}-1}{\mathbb{L}^{N_3s+\nu_3}-1}\\
		&+\mathcal{E}_{st}(\mathring{E}_{0,2,3},\mathring{D}_{0,2,3})\frac{\mathbb{L}-1}{\mathbb{L}^{N_0s+\nu_0}-1}\frac{\mathbb{L}-1}{\mathbb{L}^{N_2s+\nu_2}-1}\frac{\mathbb{L}-1}{\mathbb{L}^{N_3s+\nu_3}-1}\Bigr) .
	\end{align*}
	Remark that $(N_3,\nu_3)=(18,15)$. Observe that $$[\mathring{E}_{0,3}]=[O(\rho_3)\cap E_0]=\L-1.$$ Next, observe that $\mathring{E}_3=(O(\rho_3)\setminus E_0) \cup O(\rho_3,\rho_y)\cup O(\rho_3,\rho_z)$ and $\mathring{D}_3=D_3\vert_{\mathring{E}_3}=0.$ Therefore, we compute
	\[
	\mathcal{E}_{st}(\mathring{E}_3,\mathring{D}_3)=(\L-1)^2+\L-1 \quad \mbox{and} \quad \mathcal{E}_{st}(\mathring{E}_{0,3},\mathring{D}_{0,3})=\L-1.
	\]
	Similarly to the computations on the level of $X_1$, one computes
	\begin{align*}
		\mathcal{E}_{st}(\mathring{E}_{1,3},\mathring{D}_{1,3})=\L-2 +\frac{\L-1}{\L^{\nicefrac{1}{2}}-1}+\frac{\L-1}{\L^{\nicefrac{1}{3}}-1}, \qquad \mathcal{E}_{st}(\mathring{E}_{0,1,3},\mathring{D}_{0,1,3})=1,\\
		\mathcal{E}_{st}(\mathring{E}_{2,3},\mathring{D}_{2,3})=\L-2 +\frac{\L-1}{\L^{\nicefrac{1}{2}}-1}+\frac{\L-1}{\L^{\nicefrac{1}{3}}-1}, \qquad \mathcal{E}_{st}(\mathring{E}_{0,2,3},\mathring{D}_{0,2,3})=1.
	\end{align*}
	
	Combining all of this, we obtain
	\begin{align*}
		Z_2(s)&=\L^{-3}  \Bigl( ((\L-1)^2+\L-1)\cdot\frac{\mathbb{L}-1}{\mathbb{L}^{N_3s+\nu_3}-1}
		\\&+(\L-1)\cdot\frac{\mathbb{L}-1}{\mathbb{L}^{N_0s+\nu_0}-1}\frac{\mathbb{L}-1}{\mathbb{L}^{N_3s+\nu_3}-1}\\
		&+\Bigl(\L-2 +\frac{\L-1}{\L^{\nicefrac{1}{2}}-1}+\frac{\L-1}{\L^{\nicefrac{1}{3}}-1}\Bigr)\cdot \frac{\mathbb{L}-1}{\mathbb{L}^{N_1s+\nu_1}-1}\frac{\mathbb{L}-1}{\mathbb{L}^{N_3s+\nu_3}-1}\\
		&+1\cdot\frac{\mathbb{L}-1}{\mathbb{L}^{N_0s+\nu_0}-1}\frac{\mathbb{L}-1}{\mathbb{L}^{N_1s+\nu_1}-1}\frac{\mathbb{L}-1}{\mathbb{L}^{N_3s+\nu_3}-1}\\
		&+\Bigl(\L-2 +\frac{\L-1}{\L^{\nicefrac{1}{2}}-1}+\frac{\L-1}{\L^{\nicefrac{1}{3}}-1}\Bigr)\cdot\frac{\mathbb{L}-1}{\mathbb{L}^{N_2s+\nu_2}-1}\frac{\mathbb{L}-1}{\mathbb{L}^{N_3s+\nu_3}-1}\\
		&+1\cdot\frac{\mathbb{L}-1}{\mathbb{L}^{N_0s+\nu_0}-1}\frac{\mathbb{L}-1}{\mathbb{L}^{N_2s+\nu_2}-1}\frac{\mathbb{L}-1}{\mathbb{L}^{N_3s+\nu_3}-1}\Bigr).
	\end{align*}
	
	It is easy to verify the equality
	\begin{align*}
\frac{\L-1}{\L^{N_1s+\nu_1}-1}\frac{\L-1}{\L^{N_2s+\nu_2}-1}&=\frac{\L-1}{\L^{N_1s+\nu_1}-1}\frac{\L-1}{\L^{N_3s+\nu_3}-1}\\&+\frac{\L-1}{\L^{N_2s+\nu_2}-1}\frac{\L-1}{\L^{N_3s+\nu_3}-1}+\frac{(\L-1)^2}{\L^{N_3s+\nu_3}-1}.
	\end{align*}
	Now a simple computation yields
	\begin{eqnarray*}
	Z_{f}^{dlt}((X_1,D_1),s)- Z_{f}^{dlt}((X_2,D_2),s)&=&Z_1(s)-Z_2(s)
	\\&=&\Bigl(\frac{\L-1}{\L^{\nicefrac{1}{2}}-1}+\frac{\L-1}{\L^{\nicefrac{1}{3}}-1}-2 \Bigr) \frac{(\L-1)^2}{\L^{N_3s+\nu_3}-1}
	\end{eqnarray*}
	and this expression is different from zero. It follows that $Z_{f}^{dlt}((X_1,D_1),s)\neq Z_{f}^{dlt}((X_2,D_2),s)$.
	
	\subsection{Example 2: dlt modifications connected via a flop}
	\label{flop}
	Consider the fans $\Sigma_A=\Sigma_1$ and $\Sigma_B$ as depicted in Figure \ref{dltmodelsAB}.
	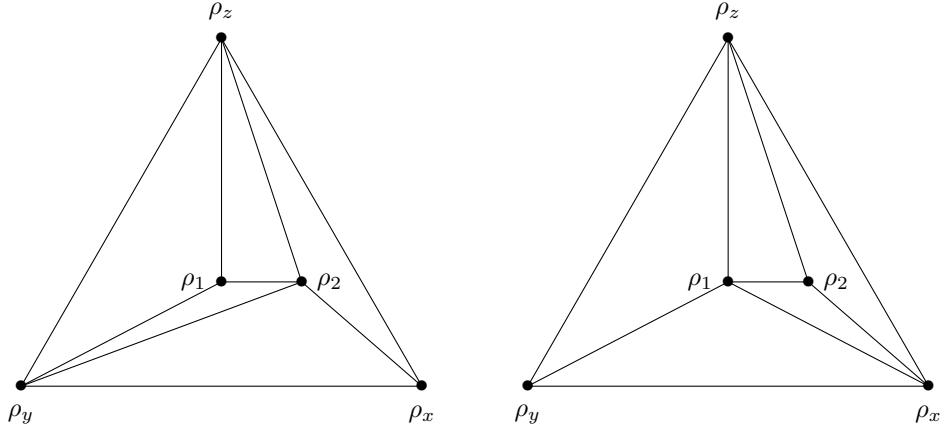
\begin{figure}[h!]
		\centering
		\begin{tikzpicture}[scale=1.85]
			\coordinate (ux) at (2.88/2,5/4);
			\coordinate (uy) at (-2.88/2,5/4);
			\coordinate (uz) at (0,15/4);
			\coordinate (u1) at (3*2.88/10-3*2.88/10,4*5/10); %(3,3,4) 1
			\coordinate (u2) at (2*2.88/5-1*2.88/5,2*5/5); %(2,1,2) 2
			\coordinate (u4) at (1*2.88/4-1*2.88/4,2*5/4); %(1,1,2) 4
			\coordinate (u5) at (2*2.88/7-2*2.88/7,3*5/7); %(2,2,3) 5
			\coordinate (u6) at (1*2.88/3-1*2.88/3,1*5/3); %(1,1,1) 6
			
			\draw (u1) -- (u2);
			\draw (u1) -- (uy);
			\draw (u1) -- (uz);
			\draw (u2) -- (uz);
			\draw (u2) -- (ux);
			\draw (u2) -- (uy);
			\draw (uz) -- (ux);
			\draw (uz) -- (uy);
			\draw (ux) -- (uy);
			
			\node at (2.88/2,5/4) {$\small\bullet$};
			\node at (-2.88/2,5/4) {$\small\bullet$};			
			\node at (0,15/4) {$\small\bullet$};				
			\node at (3*2.88/10-3*2.88/10,4*5/10) {$\small\bullet$};
			\node at (2*2.88/5-1*2.88/5,2*5/5) {$\small\bullet$};

			\node at (2.88/2,5/4-0.2) {$\rho_x$};
			\node at (-2.88/2,5/4-0.2){$\rho_y$};			
			\node at (0,15/4+0.2) {$\rho_z$};				
			\node at (3*2.88/10-3*2.88/10-0.2,4*5/10) {$\rho_1$};
			\node at (2*2.88/5-1*2.88/5+0.2,2*5/5) {$\rho_2$};
		\end{tikzpicture}
		\qquad
		\begin{tikzpicture}[scale=1.85]
			\coordinate (ux) at (2.88/2,5/4);
			\coordinate (uy) at (-2.88/2,5/4);
			\coordinate (uz) at (0,15/4);
			\coordinate (u1) at (3*2.88/10-3*2.88/10,4*5/10); %(3,3,4) 1
			\coordinate (u2) at (2*2.88/5-1*2.88/5,2*5/5); %(2,1,2) 2
			\coordinate (u4) at (1*2.88/4-1*2.88/4,2*5/4); %(1,1,2) 4
			\coordinate (u5) at (2*2.88/7-2*2.88/7,3*5/7); %(2,2,3) 5
			\coordinate (u6) at (1*2.88/3-1*2.88/3,1*5/3); %(1,1,1) 6
			
			\draw (u1) -- (u2);
			\draw (u1) -- (uy);
			\draw (u1) -- (uz);
			\draw (u2) -- (uz);
			\draw (u2) -- (ux);
			\draw (u1) -- (ux);
			\draw (uz) -- (ux);
			\draw (uz) -- (uy);
			\draw (ux) -- (uy);
			
			\node at (2.88/2,5/4) {$\small\bullet$};
			\node at (-2.88/2,5/4) {$\small\bullet$};			
			\node at (0,15/4) {$\small\bullet$};		
			\node at (3*2.88/10-3*2.88/10,4*5/10) {$\small\bullet$};
			\node at (2*2.88/5-1*2.88/5,2*5/5) {$\small\bullet$};

			\node at (2.88/2,5/4-0.2) {$\rho_x$};
			\node at (-2.88/2,5/4-0.2){$\rho_y$};			
			\node at (0,15/4+0.2) {$\rho_z$};			
			\node at (3*2.88/10-3*2.88/10-0.2,4*5/10) {$\rho_1$};
			\node at (2*2.88/5-1*2.88/5+0.2,2*5/5) {$\rho_2$};
		\end{tikzpicture}
		\caption{The fans $\Sigma_A$ and $\Sigma_B$}
		\label{dltmodelsAB}
	\end{figure}
	Define $X_A=X(\Sigma_A)$ and $X_B=X(\Sigma_B)$. The induced morphisms $g_A:(X_A,\Delta_A)\rightarrow (\A^3,Z(f))$ and $g_B:(X_B,\Delta_B)\rightarrow (\A^3,Z(f))$ are dlt modifications. Indeed, $g_A=g_1$, so we already verified that this morphism is a dlt modification in Section \ref{blowup}. Showing that $(X_B,\Delta_B)$ is a dlt pair can be verified in a similar way as in Section \ref{blowup}. The fact that $K_{X_B}+\Delta_B$ is $g_B$-nef follows immediately from our computations from before: the support function $\phi_1$ is linear on the cone $\langle \rho_x,\rho_y,\rho_1,\rho_2 \rangle_{\geq 0}$. In particular, $\phi_1=\phi_A=\phi_B$ which implies that $\phi_B$ is convex.
	%Finally, remark that the two models are connected by a flop $\eta:X_A\dashrightarrow X_B$.

	Denote by $Z_A(s)$ the contribution to $Z_{f}^{dlt}((X_A,\Delta_A),s)$ that corresponds to $V(\rho_y,\rho_2)$  and denote by $Z_B(s)$ the contribution to $Z_{f}^{dlt}((X_B,\Delta_B),s)$ that corresponds to $V(\rho_x,\rho_1)$. The restriction
	\[
	\eta\vert_{X_A\setminus V(\rho_y,\rho_2)}:X_A\setminus V(\rho_y,\rho_2)\rightarrow X_B\setminus V(\rho_x,\rho_1)
	\]
	is an isomorphism. Therefore, we see that $Z_A(s)$ and $Z_B(s)$ are the only contributions of $Z_{f}^{dlt}((X_A,\Delta_A),s)$ and $Z_{f}^{dlt}((X_B,\Delta_B),s)$, respectively, that could possibly differ.
	Using the same techniques as in Section \ref{blowup} one computes
	\[
	Z_A(s)=\Bigl( (\L-1)\frac{\L-1}{\L^{\nicefrac{1}{2}}-1}+\frac{\L-1}{\L^{\nicefrac{1}{2}}-1} \ \Bigr)\frac{\L-1}{\L^{N_2s+\nu_2}-1}+\frac{\L-1}{\L^{\nicefrac{1}{2}}-1}\frac{\L-1}{\L^{N_1s+\nu_1}-1}\frac{\L-1}{\L^{N_2s+\nu_2}-1}
	\]
	and
	\[
	Z_B(s)=\Bigl( \L-1 + (\L+1)\frac{\L-1}{\L^{\nicefrac{1}{2}}-1} \ \Bigr)\frac{\L-1}{\L^{N_1s+\nu_1}-1}+\frac{\L-1}{\L^{\nicefrac{1}{2}}-1}\frac{\L-1}{\L^{N_1s+\nu_1}-1}\frac{\L-1}{\L^{N_2s+\nu_2}-1}.
	\]
	Once again, it is not hard to verify that $Z_A(s)\neq Z_B(s)$, so that $Z_{f}^{dlt}((X_A,D_A),s)\neq Z_{f}^{dlt}((X_B,D_B),s)$.
	
	\section{On Xu's proof}\label{sec:Xu}
	In this section, we will outline the problems in the proof of Xu's claim that $Z_{f}^{dlt}$ is independent of the choice of dlt modification.
		Let $f$ be a non-constant regular function on a smooth quasi-projective $k$-variety $X$.
		
		\subsection{Proposition 3.1 of \cite{Xu1}}
The following statemement is a special case of Proposition 3.1 of \cite{Xu1}.

\begin{quotation}
	Let $(X_A,\Delta_A)\rightarrow (X,Z(f))$ and $(X_B,\Delta_B)\rightarrow (X,Z(f))$ be two dlt modifications and assume that there is a birational map $X_A \dashrightarrow X_B$ that is isomorphic over an open set containing all the generic points of the strata. Then $$Z^{dlt}_{f}((X_A,\Delta_A),s) = Z^{dlt}_{f}((X_B,\Delta_B),s).$$
	\end{quotation}
	
The example in Section \ref{flop} shows that this statement is incorrect; let us indicate more precisely what goes wrong in the proof. 	Let $E_i,i\in I,$ be the irreducible components of $\Delta_A$ that appear with coefficient $1$. Let $W_A$ be a stratum, that is, an irreducible component of $\mathring{E}_J$ for some $J\subset I$. By assumption, there is a unique stratum $W_B$ that corresponds to $W_A$. Define $D_A=\mathrm{Diff}_{\overline{W}_A}(\Delta_A)\vert_{W_A}$ and $D_B=\mathrm{Diff}_{\overline{W}_B}(\Delta_B)\vert_{W_B}$.
	In the proof of Proposition 3.1 in \cite{Xu1}, it is claimed that $(W_A,D_A)$ and $(W_B,D_B)$ are crepant birational. However, this is not true in general. Consider the example from Section \ref{flop}. Set $W_A=\mathring{E}_1\subset X_A$ and $W_B=\mathring{E}_1\subset X_B$. Then, by our computations in Section \ref{flop}, we obtain that $$\mathcal{E}_{st}(W_B,D_B)-\mathcal{E}_{st}(W_A,D_A)=  \L-1 + (\L+1)\frac{\L-1}{\L^{\nicefrac{1}{2}}-1}  \neq 0.$$ This implies that $(W_A,D_A)$ and $(W_B,D_B)$ are not crepant birational, since crepant birational klt pairs have the same stringy motive.

\subsection{Proposition	3.2 of \cite{Xu1}}
The following statemement is a special case of Proposition 3.2 of \cite{Xu1}.
	
\begin{quotation}
	Let $\gamma : (X_2,\Delta_2) \rightarrow (X_1,\Delta_1)$ be a morphism of dlt modifications of $(X,Z(f))$. Let $U \subset X_1$ be an open set containing all log canonical centers of $(X_1,\Delta_1)$ such that $(U,\Delta_1\vert_U)$ is snc. Set $V=\gamma^{-1}(U)$ and assume that $(V,\Delta_2\vert_V ) \rightarrow (U,\Delta_1\vert_U)$ is the blow up of a stratum. Then $$Z^{dlt}_{f}((X_1,\Delta_1), s) = Z^{dlt}_{f}((X_2,\Delta_2), s).$$
	\end{quotation}
	
 Here we reversed the roles of $X_1$ and $X_2$ compared to the phrasing in \cite{Xu1}.	
	 The example in Section \ref{blowup} contradicts this assertion:	
	set	\[
	U=X_1\setminus\Bigl(V(\rho_y,\rho_2)\cup V(\rho_z,\rho_1)\Bigr),\qquad V=\gamma^{-1}(U).
	\]
	Then $\gamma\vert_V\colon V\rightarrow U$ is the blow-up of $U$ with center $E_1\cap E_2\cap U$, but we have shown that $Z^{dlt}_{f}((X_1,\Delta_1), s) \neq Z^{dlt}_{f}((X_2,\Delta_2), s)$.
	In the proof of Proposition 3.2 of \cite{Xu1}, the issue appears to be the reduction to the case where $G$ is a point.
\subsection{Proof of Theorem 1.2 in \cite{Xu1}}
Finally, there appears to be an issue with the proof of Theorem 1.2 in \cite{Xu1}.
In the first step of the argument, one can simply take $\tilde{X}_1=\tilde{X}_2$ to be a common log resolution for $(X_1^{dlt},D_1^{dlt})$ and $(X_2^{dlt},D_2^{dlt})$. The argument does not provide any information on the relation between the dlt zeta functions computed on these two dlt pairs, because, in the notation of \cite{Xu1}, it does not allow us to switch between the values $j=1$ and $j=2$.
	
\section{Further comments}
It is not clear to us if the construction in \cite{Xu1} can be modified to obtain a well-defined invariant of a similar type. Regardless of its independence of the dlt modification, one can still ask whether the dlt motivic zeta function from \cite{Xu1} for a fixed dlt modification can provide insight into the Denef--Loeser motivic zeta function of $f$ and the monodromy conjecture. We have constructed similar examples of non-degenerate polynomials $f$ over $\mathbb{C}$ such that
\begin{itemize}
\item the set of poles of the dlt motivic zeta function depends on the chosen dlt modification;
\item for some choice of a dlt modification, the dlt motivic zeta function has a pole that is not a pole of the Denef--Loeser motivic zeta function;
\item for some choice of a dlt modification, the dlt motivic zeta function has a pole $\alpha\in \mathbb{Q}$ such that $\exp(2\pi i\alpha)$ is not a local monodromy eigenvalue of $f$.
\end{itemize}
This means that, even in the non-degenerate case, the dlt motivic zeta function cannot be used in a straightforward way to find poles of the Denef--Loeser zeta function, and it does not satisfy the direct analogue of the monodromy conjecture.

 More can be said in the two-dimensional case.
		Let $f$ be a non-constant regular function on a connected smooth surface $X$ over $k$. Consider any two dlt modifications $g_i:(X_i,\Delta_i)\rightarrow (X,Z(f)_{red}),i\in\{1,2\}$. It can be verified by elementary techniques that
		\[
		Z_{f}^{dlt}((X_1,\Delta_1),s)=Z_{f}^{dlt}((X_2,\Delta_2),s).
		\]
		Thus, in dimension two, the dlt motivic zeta function is well-defined. Furthermore, one can check that the dlt motivic zeta function has exactly the same set of poles as the Denef--Loeser motivic zeta function. Details will appear in the second-named author's PhD thesis.

	%\section*{Acknowledgements} The authors are partially supported by grant G.0792.18
	%of the Fund for Scientific Research -- Flanders (FWO), and long term structural funding (Methusalem grant) of the Flemish Government.

	\bibliographystyle{alpha}
	\bibliography{references1}
\end{document}